\documentclass[12pt]{article}
\textheight=20cm \textwidth=14cm

\usepackage{amsmath} 
\usepackage{amssymb} 
\usepackage{graphicx} 
\usepackage{enumerate}
\usepackage[utf8]{inputenc}
\usepackage{amsthm}
\usepackage{color}

\newtheorem{thm}{Theorem}

\newcommand{\F}{\mathbb{F}}
\newcommand{\Fq}{\mathbb{F}_q}

\begin{document}

\title{On the Index of the Diffie-Hellman Mapping}

\date{}
\author{Leyla I\c s\i k$^1$, Arne Winterhof$^2$ }

\maketitle

\noindent
$^1$ \.{I}stinye \"{U}niversitesi Topkap{\i} Kamp\"{u}s\"{u}, Maltepe Mah., Teyyareci Sami Sk., No.3 Zeytinburnu, 34010 \.{I}stanbul, Turkey\\
E-mail: leyla.isik@istinye.edu.tr\\

\noindent
$^2$ Johann Radon Institute for Computational and Applied Mathematics\\
Austrian Academy of Sciences, Altenbergerstr.\ 69, 4040 Linz, Austria\\
E-mail: arne.winterhof@oeaw.ac.at

\begin{abstract} Let $\gamma$ be a generator of a cyclic group $G$ of order $n$. 
The least index of a self-mapping $f$ of $G$ is the index of the largest subgroup $U$ of $G$ such that $f(x)x^{-r}$ is constant on each coset of $U$ for some positive integer~$r$.  
We determine the index of the univariate Diffie-Hellman mapping $d(\gamma^a)=\gamma^{a^2}$, $a=0,1,\ldots,n-1$, and show that any mapping of small index coincides with~$d$ only on a small subset of $G$.
Moreover, we prove similar results for the bivariate Diffie-Hellman mapping $D(\gamma^a,\gamma^b)=\gamma^{ab}$, $a,b=0,1,\ldots,n-1$. 
In the special case that $G$ is a subgroup of the multiplicative group of a finite field we present improvements.
\end{abstract}

\bigskip

{\bf Keywords}:
Diffie-Hellman mapping,  cryptography, cyclic groups,
index, cyclotomic mappings.

\bigskip

{\bf Mathematical Subject Classification}: 11T06, 11T41, 11T71.

\section{Introduction}
Let $G$ be a (multiplicatively written) finite cyclic group of order $n \ge 2$, $\gamma$ be a generator of $G$
and $\ell$ be a positive divisor of~$n$. Then the set of nonzero $\ell$th powers
$$C_{\ell,0}=\left\{\gamma^{j\ell} : j=0,1,...,\frac{n}{\ell}-1\right\}$$ is a subgroup of $G$ of index $\ell$. The elements of the factor group $G /C_{0}$ are the {\it cyclotomic cosets}
\begin{equation*}
C_{\ell,i}=\gamma^{i}C_{\ell,0}, ~~ i=0,1,...,\ell-1.
\end{equation*}

For any positive integer $r$ and any $a_0, a_1,..., a_{\ell-1} \in G$, we define the {\it $r$-th order cyclotomic
mapping $f^{r}_{a_0,a_1,...,a_{\ell-1}}$ of index $\ell$} by
\begin{equation}\label{eqn:rth-GcycMap}
f^{r}_{a_0,a_1,...,a_{\ell-1}}(x)=
 a_{i}x^{r} \quad  \mbox{if $x\in C_{\ell,i}$}, \quad i=0,1,\ldots,\ell-1.
\end{equation}
For a self-mapping $f$ of $G$ we denote by $ind(f)$ the smallest index $\ell$ such that~$f$ can be represented by a mapping of the form $(\ref{eqn:rth-GcycMap})$. 

Any self-mapping of the multiplicative group $\F_q^*$ of a finite field can be uniquely represented by a polynomial over $\F_q$ of degree at most $q-1$ with $f(0)=0$. The index of any polynomial over $\Fq$ 
(with constant term $0$) introduced in \cite{agw,Wang} (which was based on \cite{NidWin05}) coincides with our definition. 
In this special case the index has raised increasing interest, see for example \cite{isikWin18}, the survey article \cite{wasurvey} and references therein.
In particular, any mapping of small index is highly predictable and a large index is needed for cryptographic functions.

The security of the Diffie-Hellman key exchange, see for example \cite[Chapter~2]{niwi}, for the group $G$ is based on the infeasibility of evaluating
the {\it (bivariate) Diffie-Hellman mapping} $D$,
\begin{equation}\label{DHeqnG}
D(\gamma^a,\gamma^b)=\gamma^{ab},\quad a,b=0,\ldots,n-1.
\end{equation}
The bivariate Diffie-Hellman mapping can be efficiently reduced to the {\it univariate Diffie-Hellman mapping},
\begin{equation}\label{DH1eqnG}
d(\gamma^a)=\gamma^{a^2},\quad a=0,\ldots,n-1,
\end{equation}
since $$D(\gamma^a,\gamma^b)^2=d(\gamma^{a+b})d(\gamma^{a})^{-1}d(\gamma^{b})^{-1}$$
and square roots in $G$ can be calculated efficiently using the {\it Tonelli-Shanks algorithm}, see for example \cite[Chapter 7]{BachSha96}.

In practice, subgroups of the multiplicative group of a finite field and elliptic curves over finite fields are mainly used. For these groups many results on 
polynomials representing and interpolating the univariate and bivariate Diffie-Hellman mapping have been obtained, in particular, lower bounds on degree and sparsity, see
\cite{blga04,blga08,cosh,elsh,kiwi04,kiwi06,lawi,meve,MeiWin02,Win01} and the monograph~\cite{ShpBook}.

In this paper, we first study the index of the univariate Diffie-Hellman mapping for a generic cyclic group of order $n$ in Section~\ref{uni}.
We show that $ind(d)$ is $n$ for odd $n$ and $n/2$ for even $n$ as well as that each mapping of small index coincides with $d$ only on a small subset of $G$.

In Section \ref{bi} we introduce the index pair of a bivariate function over $G$ and obtain similar results for the bivariate Diffie-Hellman mapping, as well.
For~$G=\F_q^*$ and $k$-variate polynomials the index $k$-tuple has already been defined in \cite{mww}.

In the special case that $G$ is a subgroup of the multiplicative subgroup $\F_q^*$ of the finite field $\F_q$ we obtain some improvements in Section~\ref{fq}. 

We will use the notation 
$$f(n)=O(g(n))\quad\mbox{if} \quad |f(n)|\le cg(n)$$ for some constant~$c>0$ and 
$$f(n)=o(g(n))\quad \mbox{if}\quad \lim\limits_{n\rightarrow\infty} \frac{f(n)}{g(n)}=0.$$
$$f(n)\ll g(n)\quad \mbox{and}\quad g(n)\gg f(n)\quad \mbox{are both equivalent to }f(n)=O(g(n)).$$

\section{Index of the univariate Diffie-Hellman mapping}\label{uni}

First we determine the index $ind(d)$ of the univariate Diffie-Hellman mapping.

\begin{thm}\label{thm:IndUniv}
Let $G$ be any cyclic group of order $n$ with generator $\gamma$. Then the index of the univariate Diffie-Hellman mapping $d$ of $G$ defined by $(\ref{DH1eqnG})$ is
 \begin{equation*}
 ind(d)=\left\{\begin{array}{cl} n, & \mbox{$n$ is odd} ,\\
                                 n/2, & \mbox{$n$ is even}.
                                \end{array}\right. 
\end{equation*}
\end{thm}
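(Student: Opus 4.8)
The plan is to translate the cyclotomic representation condition into a purely arithmetic statement about exponents and then read off the admissible indices. Fix a divisor $\ell$ of $n$ and a positive integer $r$. Writing $x=\gamma^a$, we have $d(x)x^{-r}=\gamma^{a^2}\gamma^{-ra}=\gamma^{a^2-ra}$, so $d$ is representable as an $r$-th order cyclotomic mapping of index $\ell$ if and only if the map $h(a)=a^2-ra \bmod n$ is constant on each cyclotomic coset $C_{\ell,i}$. In exponents, $C_{\ell,i}=\{\,i+j\ell \bmod n : j=0,\ldots,n/\ell-1\,\}$ is exactly the orbit of $i$ under the shift $a\mapsto a+\ell$ on $\mathbb{Z}/n$, so constancy of $h$ on every coset is equivalent to the single condition
\begin{equation*}
h(a+\ell)\equiv h(a)\pmod n\qquad\text{for all }a\in\mathbb{Z}/n.
\end{equation*}

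Next I would compute the relevant difference directly:
\begin{equation*}
h(a+\ell)-h(a)=(a+\ell)^2-r(a+\ell)-a^2+ra=\ell\bigl(2a+\ell-r\bigr),
\end{equation*}
which is an affine function of $a$. An affine function $\alpha a+\beta$ vanishes identically on $\mathbb{Z}/n$ if and only if both $\alpha\equiv 0$ and $\beta\equiv 0\pmod n$ (evaluate at $a=0$ and $a=1$). Hence the representability condition splits into a linear part $2\ell\equiv 0\pmod n$ and a constant part $\ell(\ell-r)\equiv 0\pmod n$. The key observation is that the constant part imposes no restriction on $\ell$: choosing the positive integer $r=\ell$ makes $\ell(\ell-r)=0$. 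Therefore $d$ admits a cyclotomic representation of index $\ell$ \emph{if and only if} $n\mid 2\ell$.

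It then remains to determine which divisors $\ell$ of $n$ satisfy $n\mid 2\ell$. Writing $n=\ell m$ with $m$ a positive integer, the condition $n\mid 2\ell$ becomes $m\mid 2$, so $m\in\{1,2\}$, i.e.\ $\ell=n$ or $\ell=n/2$, the latter being an integer only when $n$ is even. Taking the smallest admissible $\ell$ yields $ind(d)=n$ for odd $n$ and $ind(d)=n/2$ for even $n$, as claimed; in the even case $r=n/2$ gives an explicit representation, and in both cases $\ell=n$ works trivially since its cosets are singletons.

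I expect the only genuinely delicate point to be the lower bound, i.e.\ showing that no smaller index is possible. This is precisely the necessity of the linear condition $2\ell\equiv 0\pmod n$, which reflects the fact that the quadratic term $a^2$ cannot be absorbed into a shift by $\ell$ unless $2\ell$ is a multiple of $n$. The multiplier $r$ supplies a single degree of freedom, but it only controls the constant term $\ell(\ell-r)$ and never the leading coefficient $2\ell$, so it cannot lower the index further. Everything else reduces to the elementary divisibility analysis above, and care is mainly needed to interpret the coset condition as an identity over all of $\mathbb{Z}/n$ rather than on a single residue class.
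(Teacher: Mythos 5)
Your proof is correct, and while the core computation is the same as the paper's---your difference $h(a+\ell)-h(a)=\ell(2a+\ell-r)$ is precisely the congruence the paper extracts as $r\equiv \ell+2i \bmod n/\ell$ by comparing the coset values at $j=0$ and $j=1$---your organization of it is genuinely different and tidier. By imposing $h(a+\ell)\equiv h(a)\pmod n$ for \emph{all} $a\in\mathbb{Z}/n$ (not just at the representatives $i=0,\ldots,\ell-1$ with two values of $j$), and by splitting the resulting affine function into its linear part $2\ell$ and constant part $\ell(\ell-r)$, you get a single necessary-and-sufficient characterization: a representation of index $\ell$ exists if and only if $n\mid 2\ell$. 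This one equivalence absorbs two steps the paper must handle separately. First, the case $\ell=1$: the paper's $j\in\{0,1\}$ comparison yields only the vacuous congruence $r\equiv 1\bmod n$ there, so it needs an extra evaluation at $j=n-1$ to force $n\in\{1,2\}$, whereas in your setup $\ell=1$ is killed uniformly by $n\mid 2$. Second, the sufficiency construction for even $n$ and $\ell=n/2$, which the paper verifies explicitly with $r\equiv n/2\bmod 2$ and $a_i=\gamma^{i^2-ir}$, falls out of your observation that the multiplier $r$ freely annihilates the constant part (take $r=\ell$) but can never touch the leading coefficient $2\ell$. What the paper's route buys is the explicit constants $a_i$ of the representation; what yours buys is the complete elimination of case analysis, since upper and lower bounds on $ind(d)$ come from the same divisibility condition $m\mid 2$ for $n=\ell m$.
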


\begin{proof} Let $\ell$ denote the index of $d$, that is, 
$$d=f^r_{a_0,\ldots,a_{\ell-1}}$$
for some positive integer $r$ and $a_0,\ldots,a_{\ell-1}\in G$,
where $f^r_{a_0,\ldots,a_{\ell-1}}$ is defined by~$(\ref{eqn:rth-GcycMap})$. 
Then we have  
\begin{equation}\label{eqn:Univ1}
d(\gamma^{j\ell+i})=a_i \gamma^{r(j\ell+i)}=\gamma^{(j\ell+i)^2},\quad j=0,\ldots,\frac{n}{\ell}-1,\quad i=0,\ldots,\ell-1.
\end{equation} 
Taking $j=0$ and $j=1$ we get
\begin{equation*} 
a_i=\gamma^{-ri+i^2}=\gamma^{(\ell+i)^2-r(\ell+i)}, \quad i=0,\ldots,\ell-1,
\end{equation*} 
which implies 
\begin{equation}\label{eqn:r_cong}
r\equiv \ell+2i \bmod \dfrac{n}{\ell},  \quad i=0,\ldots,\ell-1.
\end{equation} 
 Thus either $\ell=1$ or $n/\ell$ divides $2$. 
 
 If $\ell=1$, note that $r\equiv\ell\equiv1 \bmod n$ by~$(\ref{eqn:r_cong})$.
 Then $(\ref{eqn:Univ1})$ applied with $j=0$ and $j=n-1$ implies $\gamma^{-1}=\gamma$ and thus $n\in \{1,2\}$. 
 
 If $n/\ell$ divides $2$, we have $\ell=n$ if $n$ is odd and $\ell=n/2$ or $\ell=n$ if $n$ is even.
It remains to show that for even $n$, $d$ can be represented by a mapping of index~$n/2$. 

Suppose that $n$ is even and $\ell=n/2$, which means that each coset $C_{\ell,i}$ of~$G$ contains only two elements, $\gamma^{i}$ and $\gamma^{i+n/2}$, for $i=0,\ldots,n/2-1$. 
Choose any~$r$ with $r\equiv \frac{n}{2} \bmod  2$ and $a_i=\gamma^{i^2-ir}$. Then it is easy to verify that 
\begin{equation*}
d(\gamma^{i})=\gamma^{i^2}=a_i \gamma^{ir}
\quad \mbox{and}\quad 
d(\gamma^{i+n/2})=\gamma^{(i+n/2)^2}=a_i \gamma^{(i+n/2)r} 
\end{equation*} 
for $i=0,1,\ldots,n/2-1$
and the result follows.
\end{proof}

Theorem~\ref{thm:IndUniv} states only that the univariate Diffie-Hellman mapping $d$ cannot coincide with a mapping of small index in {\it all} points. 
However, by the following result it cannot even coincide in {\it many} points.

\begin{thm}\label{two}
 The univariate Diffie-Hellman mapping $d$ of the cyclic group $G$ of order~$n$ coincides with any mapping of index $\ell$
 in   
 $$O(\ell n^{1/2})$$ 
 elements of $G$. If $n$ is prime, we have the better bound $2\ell$. 
\end{thm}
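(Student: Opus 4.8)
The plan is to translate the coincidence condition into the exponents and reduce it to counting solutions of quadratic congruences modulo $n$. Since a mapping $f$ of index $\ell$ has the form $f(x)=a_ix^r$ for $x\in C_{\ell,i}$, writing $a_i=\gamma^{c_i}$ and $x=\gamma^a$ with $a\equiv i\pmod{\ell}$, the equation $d(\gamma^a)=f(\gamma^a)$ becomes $\gamma^{a^2}=\gamma^{c_i+ra}$, that is,
$$a^2-ra-c_i\equiv 0 \pmod{n}.$$
Hence the number of points where $d$ and $f$ agree equals the sum over $i=0,\ldots,\ell-1$ of the number of $a\in\{0,\ldots,n-1\}$ with $a\equiv i\pmod{\ell}$ that solve this congruence. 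Dropping the restriction $a\equiv i\pmod{\ell}$ only increases the count, so it suffices to bound, for each fixed $i$, the number of solutions modulo $n$ of a monic quadratic congruence, and then multiply by $\ell$. The prime case $n=p$ is then immediate: the polynomial $a^2-ra-c_i$ is monic of degree $2$ over the field $\Fp$, hence has at most $2$ roots, and summing over the $\ell$ cosets gives the bound $2\ell$.

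For general $n$ the main step is to show that a monic quadratic congruence $a^2-ra-c\equiv 0\pmod{n}$ has $O(n^{1/2})$ solutions with an absolute implied constant. I would proceed by the Chinese Remainder Theorem, reducing to prime power moduli $p^e\,\|\,n$ and multiplying the local solution counts $N(p^e)$. For odd $p$ the factor $2$ is invertible, so completing the square turns $N(p^e)$ into the number of square roots of a fixed residue $D$ modulo $p^e$; this is largest when $D\equiv 0$, giving $p^{\lfloor e/2\rfloor}$ solutions, and in all cases $N(p^e)$ stays below $\sqrt{p^e}$ up to a small factor that exceeds $1$ only for the single prime $p=3$ with odd $e$. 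The prime $p=2$ is handled separately and again contributes only a bounded factor. Multiplying the local bounds gives $\prod_{p^e\,\|\,n}N(p^e)\le C\,n^{1/2}$ for an absolute constant $C$, and summing over the $\ell$ cosets yields $O(\ell n^{1/2})$.

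The delicate point is precisely this last estimate: the crude per-prime bound $2p^{\lfloor e/2\rfloor}$ would introduce a spurious factor $2^{\omega(n)}=n^{o(1)}$ and only give $O(\ell n^{1/2+o(1)})$. The key observation that rescues the clean exponent $1/2$ is that the ratio $N(p^e)/\sqrt{p^e}$ is at most $1$ for every prime power except the small primes $p=2,3$, each of which occurs at most once in $n$; this keeps the product of local ratios bounded by an absolute constant. I expect verifying the square-root counts at $p=2$ (where residues $\equiv 1\pmod{8}$ have four square roots) and confirming that $D\equiv 0$ is the only worst case to be the main obstacle.
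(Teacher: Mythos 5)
Your reduction is exactly the paper's: for each coset $C_{\ell,i}$ you turn the coincidence condition into the monic quadratic congruence $a^2-ra-c_i\equiv 0 \bmod n$, drop the coset restriction, and multiply the single-congruence bound by $\ell$; the prime case (at most two roots of a monic quadratic over $\Fp$, hence $2\ell$) is also identical. Where you genuinely diverge is the key counting lemma: the paper simply cites Konjagin's theorem on monic polynomial congruences to conclude $N=O(n^{1/2})$, whereas you prove the quadratic case from scratch via the Chinese Remainder Theorem and square-root counts modulo prime powers, with the essential observation that the local ratio $N(p^e)/p^{e/2}$ exceeds $1$ only at $p\in\{2,3\}$, so the naive loss $2^{\omega(n)}=n^{o(1)}$ is avoided and the implied constant is absolute --- this is a correct and self-contained substitute, while the paper's citation is shorter and Konjagin's result covers arbitrary degree $d$ (bound $O_d(n^{1-1/d})$), which the paper does not need here. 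One slip in your write-up should be corrected, though it does not affect the conclusion: for odd $e$ (in particular $e=1$) the worst case is \emph{not} $D\equiv 0$, which gives only $p^{\lfloor e/2\rfloor}$ square roots, but $D=p^k u$ with $k\le e-1$ even and $u$ a quadratic residue, which gives $2p^{(e-1)/2}$ roots; since $2p^{(e-1)/2}/p^{e/2}=2/\sqrt{p}\le 1$ for $p\ge 5$, your exception list $\{2,3\}$ and the final bound survive, but the sentence claiming $D\equiv 0$ is extremal is false as stated (you yourself flagged this verification as the remaining obstacle). At $p=2$ it may help to note that when $r$ is odd the derivative $2a-r$ is a unit, so Hensel lifting gives at most two solutions modulo $2^e$; only even $r$ requires the square-root count with the four roots of residues $\equiv 1 \bmod 8$.
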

\begin{proof}
 For fixed $a\in \{0,1,\ldots,n-1\}$ consider the mapping $f_a(y)=\gamma^ay^r$, $y\in G$.
 We have to estimate the number $N$ of $x=0,1,\ldots,n-1$ with 
 $$f_a(\gamma^x)=d(\gamma^x),$$
 that is,
 $$\gamma^{a+rx}=\gamma^{x^2},$$
 or equivalently,
 $$x^2-rx-a\equiv 0\bmod n.$$
 By \cite{kon} we have
 $$N=O(n^{1/2})$$
 for any $n$. If $n$ is prime, we have obviously $N\le 2$.
 Since each function of index~$\ell$ is the combination of at most $\ell$ different functions of the form $f_a$ and the result follows. 
\end{proof}

\section{Index of the bivariate Diffie-Hellman mapping}\label{bi}

Let $\ell_1$ and $\ell_2$ be divisors of $n$ and $G$ the cyclic group of order $n$.

For any positive integers $r_1$ and $r_2$ and any $a_{0,0},\dots,a_{\ell_{1}-1,\ell_{2}-1} \in G$, we define the {\it $(r_{1},r_{2})$th order cyclotomic
mapping $f^{(r_1,r_2)}_{a_{0,0},\ldots,a_{\ell_{1}-1,\ell_{2}-1}}$ of index pair $(\ell_1,\ell_2)$} by
\begin{equation}\label{eqn:rth-BivcycMap}
f^{(r_1,r_2)}_{a_{0,0},\ldots,a_{\ell_{1}-1,\ell_{2}-1}}(x,y)=
                                            a_{k_1,k_2}x^{r_1}y^{r_2} \quad \mbox{if $(x,y)\in C_{\ell_1,k_1} \times C_{\ell_2,k_2}$},
\end{equation}
for $k_1=0,\ldots,\ell_{1}-1$ and $k_2=0,\ldots,\ell_{2}-1$. For a mapping $f$ over $G$ with the property (\ref{eqn:rth-BivcycMap}) we call $(\ell_1,\ell_2)$  an {\it index pair of $f$.}

\begin{thm} 
Let $G$ be any cyclic group of order $n$. Then the bivariate Diffie-Hellman mapping $D$ of $G$ defined by $(\ref{DHeqnG})$ has the only index pair $(n,n)$. 
\end{thm}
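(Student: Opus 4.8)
The plan is to imitate the proof of Theorem~\ref{thm:IndUniv}: rewrite the existence of an index pair as a system of congruences in the exponents and then force both indices to be~$n$. Suppose $(\ell_1,\ell_2)$ is an index pair of $D$, so that $D=f^{(r_1,r_2)}_{a_{0,0},\ldots,a_{\ell_1-1,\ell_2-1}}$ for some positive integers $r_1,r_2$ and some $a_{k_1,k_2}=\gamma^{c_{k_1,k_2}}\in G$. Writing $x=\gamma^a$, $y=\gamma^b$ and recalling that $\gamma^a\in C_{\ell_1,k_1}$ exactly when $a\equiv k_1\bmod\ell_1$, the defining relation $(\ref{eqn:rth-BivcycMap})$ becomes
$$ab\equiv c_{k_1,k_2}+r_1a+r_2b \bmod n\qquad\text{for all } a\equiv k_1\bmod\ell_1,\ b\equiv k_2\bmod\ell_2.$$
The key feature to exploit is that the constant $c_{k_1,k_2}$ depends only on the two cyclotomic classes, so it cancels whenever two elements of the same class are compared.

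Next I would show $\ell_1=n$ by contradiction. Assume $\ell_1<n$; since $\ell_1\mid n$ this gives $\ell_1\le n/2$, so for any class $k_1$ both residues $k_1$ and $k_1+\ell_1$ lie in $\{0,\ldots,n-1\}$ and belong to the same class. Fixing a value $b$ (hence its class $k_2$) and applying the displayed congruence to $a=k_1$ and $a=k_1+\ell_1$, the term $c_{k_1,k_2}$ and the $r_2b$-term drop out and one is left with $\ell_1(b-r_1)\equiv 0\bmod n$. As $k_2$ ranges over $0,\ldots,\ell_2-1$ and $b$ over the corresponding class, every residue $b\in\{0,\ldots,n-1\}$ occurs, so this congruence holds for all $b$; subtracting the instances for $b$ and $b+1$ yields $\ell_1\equiv 0\bmod n$, contradicting $\ell_1<n$. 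Hence $\ell_1=n$, and since both $D$ and the definition $(\ref{eqn:rth-BivcycMap})$ are symmetric in the two coordinates, the same argument gives $\ell_2=n$. Finally, $(n,n)$ really is an index pair: then each coset $C_{n,k}$ is the singleton $\{\gamma^k\}$, so the values $a_{k_1,k_2}$ may simply be prescribed. Therefore $(n,n)$ is the unique index pair.

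The computation is routine and I expect the write-up to be about as short as that of Theorem~\ref{thm:IndUniv}; the only places needing care are bookkeeping. One must verify that $\ell_1<n$ genuinely produces two residues of the same class inside $\{0,\ldots,n-1\}$ (this is where $\ell_1\le n/2$ is used), must use that $a_{k_1,k_2}$ is constant on classes so that it cancels and isolates $\ell_1(b-r_1)$, and must confirm that letting $k_2$ sweep all classes makes $b$ run through every residue modulo~$n$ — which is precisely the step upgrading the weak relation ``$\ell_1(b-r_1)\equiv0$ for $b$ in one class'' to the decisive ``$\ell_1\equiv0\bmod n$.'' None of these is a real obstacle, so the mild subtlety is just to order these reductions correctly rather than any deep difficulty.
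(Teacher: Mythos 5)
Your proof is correct and takes essentially the same approach as the paper: both rewrite the defining relation as the exponent congruence $ab\equiv c_{k_1,k_2}+r_1a+r_2b \bmod n$ and compare it at two arguments lying in the same cyclotomic class, so that the class constant cancels and the cross term $ab$ forces the index up to $n$. The difference is purely organizational: the paper shifts $b$ by $\ell_2$ and, after a wlog ordering $\ell_1\ge\ell_2$, must dispose of the residual case $\ell_1=\ell_2=1$ separately, whereas you shift $a$ by $\ell_1$ and then $b$ by $1$ to reach the contradiction $\ell_1\equiv 0\bmod n$ in one uniform step, finishing $\ell_2$ by symmetry.
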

\begin{proof} Since otherwise the result is trivial we may assume $n\ge 2$, $\min\{\ell_1,\ell_2\}<n$ and wlog.\ $\ell_1\ge \ell_2$. 

Let $(\ell_{1},\ell_2)$ be an index pair of $D$, that is, $D$ can be represented by a mapping of the form $(\ref{eqn:rth-BivcycMap})$.
Then
\begin{equation}\label{eqn:biv1}
D(\gamma^{k_1+j_1\ell_1},\gamma^{k_2+j_2\ell_2})=\gamma^{(k_1+j_1\ell_1)(k_2+j_2\ell_2)}=a_{k_1,k_2}\gamma^{r_1(k_1+j_1\ell_1)}\gamma^{r_2(k_2+j_2\ell_2)}
\end{equation}
for $j_1=0,\ldots,n/\ell_1-1$, $j_2=0,\ldots,n/\ell_2-1$, $k_1=0,\ldots,\ell_1-1$ and $k_2=0,\ldots,\ell_2-1$.

Taking $j_1=j_2=0$ we get 
\begin{equation} \label{eqn:a1}
a_{k_1,k_2}=\gamma^{k_1k_2-r_1k_1-r_2k_2}.
\end{equation} 
Taking $j_1=0$ and $j_2=1$ gives
\begin{equation} \label{eqn:a2}
a_{k_1,k_2}=\gamma^{k_1k_2+k_1\ell_2-r_1k_1-r_2k_2-r_2\ell_2}.
\end{equation} 
Combining $(\ref{eqn:a1})$ and $(\ref{eqn:a2})$ yields 
\begin{equation*}
r_2  \equiv k_1 \bmod \frac{n}{\ell_2}
\end{equation*}
for $k_1=0,\ldots,\ell_1-1$. Thus $\ell_1=1$ and also $\ell_2=1$ by our assumption $\ell_2\le \ell_1$.

Since $\ell_1=\ell_2=1$, we have $k_1=k_2=0$ and $r_2\equiv 0\bmod n$. Then $(\ref{eqn:biv1})$ becomes
$$D(\gamma^{j_1},\gamma^{j_2})=\gamma^{j_1j_2}=a_{0,0}\gamma^{r_1j_1}$$
and thus
$$a_{0,0}=\gamma^{j_1j_2-r_1j_1}.$$
Taking $j_1=0$ and $j_1=1$, respectively, we get 
$$a_{0,0}=1=\gamma^{j_2-r_1},$$
that is,
$$j_2\equiv r_1 \bmod n$$
for $j_2=0,\ldots, n-1$. This is not possible unless $n=1$ which contradicts our assumption. 
%
%
\end{proof}

\begin{thm}\label{four}
 Any mapping of index pair $(\ell_1,\ell_2)$ coincides with the bivariate Diffie-Hellman mapping $D$ of the cyclic group $G$ of order $n$ in 
 at most $n^{1+o(1)}\ell_1\ell_2$ elements of $G^2$. 
\end{thm}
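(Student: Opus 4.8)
The plan is to mimic the one-variable argument of Theorem~\ref{two}, decomposing a mapping of index pair $(\ell_1,\ell_2)$ into its $\ell_1\ell_2$ restrictions to the product cosets and bounding the number of agreements on each one separately. First I would fix a coset $C_{\ell_1,k_1}\times C_{\ell_2,k_2}$, on which any mapping $f$ of index pair $(\ell_1,\ell_2)$ has the shape $f(x,y)=a_{k_1,k_2}x^{r_1}y^{r_2}$. Writing $x=\gamma^a$, $y=\gamma^b$ and $a_{k_1,k_2}=\gamma^{c}$ with $c=c_{k_1,k_2}$, the agreement $f(x,y)=D(x,y)=\gamma^{ab}$ translates into the congruence
$$ab-r_1a-r_2b\equiv c \bmod n.$$
Dropping the restrictions $a\equiv k_1 \bmod \ell_1$ and $b\equiv k_2 \bmod \ell_2$ can only increase the count, so it suffices to bound, uniformly in $c$, the number of all $(a,b)\in\{0,\ldots,n-1\}^2$ satisfying this congruence.

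The key step is to linearize this bilinear congruence by completing the rectangle. Since
$$ab-r_1a-r_2b=(a-r_2)(b-r_1)-r_1r_2,$$
the substitution $u\equiv a-r_2$, $v\equiv b-r_1 \bmod n$, which is a bijection on residues, turns the congruence into $uv\equiv c' \bmod n$ with $c'=c+r_1r_2$. For a fixed $u$ the number of $v\bmod n$ with $uv\equiv c' \bmod n$ is at most $\gcd(u,n)$, so the total number of solutions is at most
$$\sum_{u=0}^{n-1}\gcd(u,n)=\sum_{d\mid n}d\,\varphi(n/d)\le n\,\tau(n),$$
where $\varphi$ is Euler's function and $\tau$ the divisor function. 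Since $\tau(n)=n^{o(1)}$, each coset contributes at most $n^{1+o(1)}$ agreements, and summing over the $\ell_1\ell_2$ product cosets yields the claimed bound $n^{1+o(1)}\ell_1\ell_2$.

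The main obstacle is the bilinear congruence $ab-r_1a-r_2b\equiv c \bmod n$ itself: unlike the univariate quadratic congruence handled via \cite{kon}, it has two free variables and is genuinely two-dimensional. The decisive observation is that it factors after completing the rectangle, reducing it to the purely multiplicative congruence $uv\equiv c' \bmod n$, whose solution count is governed by $\sum_{u}\gcd(u,n)$ and hence by the divisor bound $\tau(n)=n^{o(1)}$. The only quantitative care I expect to be needed is the uniformity of the $o(1)$ term in $c$, which is automatic since the bound $\sum_{u}\gcd(u,n)$ is independent of $c$. I also note that keeping the coset restriction on, say, $a$ and summing over $k_1$ first collapses $\sum_{k_1}\sum_{a\equiv k_1}\gcd(a-r_2,n)$ into the full sum $\sum_{a}\gcd(a,n)=n^{1+o(1)}$, which would in fact give the sharper bound $\min(\ell_1,\ell_2)\,n^{1+o(1)}$; the stated estimate follows a fortiori.
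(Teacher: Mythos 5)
Your proof is correct and takes essentially the same route as the paper: the paper likewise reduces agreement on each of the $\ell_1\ell_2$ product cosets to the congruence $xy\equiv a+r_1x+r_2y\bmod n$, fixes one variable, and bounds the solution count by $\sum_{t\mid n}t\,\varphi(n/t)\le n\tau(n)=n^{1+o(1)}$; your completion of the rectangle to $uv\equiv c'\bmod n$ is only a cosmetic repackaging of that same gcd count. Your closing remark that retaining one coset restriction sharpens the bound to $\min\{\ell_1,\ell_2\}\,n^{1+o(1)}$ is a correct refinement beyond what the paper states.
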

\begin{proof}
 For each $\gamma^a\in G$ the mapping $f_a(\gamma^x,\gamma^y)=\gamma^a\gamma^{r_1x}\gamma^{r_2y}$ coincides with $D(\gamma^x,\gamma^y)=\gamma^{xy}$ if and only if 
 $$xy\equiv a+r_1x+r_2y\bmod n.$$
 For fixed $y$ put $t=\gcd(y-r_1,n)$. If $t$ does not divide $a+r_2y$, there is no solution $x$. Otherwise the equation is equivalent to 
 $$x\frac{y-r_1}{t}\equiv \frac{a+r_2y}{t} \bmod \frac{n}{t},$$
 which has a unique solution $x$ modulo $n/t$, that is, $t$ solutions modulo $n$.
 For each $t$ there are $\varphi(n/t)$ different $y\in \{0,\ldots,n-1\}$ with $\gcd(y-r_1,n)=t$, where $\varphi$ is Euler's totient function. Hence, we have
 $$\sum_{t|n}\varphi(n/t)t=n\sum_{d|n}\frac{\varphi(d)}{d}\le \tau(n)n=n^{1+o(1)}$$
 solutions, where $\tau(n)=n^{o(1)}$ is the number of divisors of $n$.
 Therefore each mapping of index pair $(\ell_1,\ell_2)$ coincides with $D$ in at most $\ell_1\ell_2n^{1+o(1)}$ elements of $G^2$.
\end{proof}

\section{Multiplicative subgroups of finite fields}\label{fq}

In this section let $G$ be a subgroup of $\F_q^*$ of order $n|q-1$ and $\gamma\in \F_q^*$ be of order $n$.
First we deal with the univariate case.

\begin{thm}\label{five}
 Let $f$ be any self-mapping of $\F_q^*$ satisfying 
 $$f(\gamma^x)=\gamma^{x^2},\quad x\in S,$$
 for a subset $S\subseteq \{N+1,\ldots,N+H\}$ of cardinality $|S|=H-s$ with $H\le n$.
 Then we have 
 $$ind(f)\ge \frac{n}{2(n-H+2s+1)}.$$
\end{thm}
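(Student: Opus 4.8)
The plan is to exploit that shifting an exponent by $\ell$ keeps a point inside the same cyclotomic coset, so that each pair of nearby agreements collapses to a single linear congruence modulo $n$. First I would set $\ell=ind(f)$ and fix a representation $f=f^r_{a_0,\ldots,a_{\ell-1}}$ of index $\ell$ as an $r$-th order cyclotomic mapping, relative to the cyclotomic cosets of $\F_q^*$. The key elementary remark is that $\gamma^{x+\ell}=\gamma^x\cdot\gamma^\ell$ and $\gamma^\ell$ is an $\ell$-th power in $\F_q^*$ (it is $(\gamma)^\ell$), so $\gamma^x$ and $\gamma^{x+\ell}$ always lie in one and the same coset $C_{\ell,i}$; hence $f(\gamma^x)=a_i\gamma^{rx}$ and $f(\gamma^{x+\ell})=a_i\gamma^{r(x+\ell)}$ share the same constant $a_i$. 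If $x\in S$, then $a_i\gamma^{rx}=\gamma^{x^2}$ forces $a_i\in G$, say $a_i=\gamma^{w_i}$, and the agreement reads $x^2-rx\equiv w_i\pmod n$.

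Next I would extract the congruence governing consecutive-in-coset pairs. If both $x$ and $x+\ell$ lie in $S$, they share the coset $C_{\ell,i}$, so $x^2-rx\equiv w_i\equiv (x+\ell)^2-r(x+\ell)\pmod n$; subtracting and cancelling gives $\ell(2x+\ell-r)\equiv0$, that is
$$2\ell x\equiv \ell(r-\ell)\pmod n.$$
The solution set $R$ of this linear congruence in $\mathbb{Z}/n$ is empty or a coset of the subgroup of order $\gcd(2\ell,n)$, so in any case $|R|\le\gcd(2\ell,n)\le 2\ell$.

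The heart of the argument is a counting step performed \emph{cyclically} on $\mathbb{Z}/n$, which is precisely what lets the complement of the interval enter the bound. Since $H\le n$, the elements of $\{N+1,\ldots,N+H\}$ are distinct modulo $n$, so reducing $S$ mod $n$ gives a set whose complement $S^c$ has $n-H+s$ elements. A residue $x$ fails the condition ``$x\in S$ and $x+\ell\in S$'' only when $x\in S^c$ or $x+\ell\in S^c$, excluding at most $2(n-H+s)$ residues; hence at least $2H-n-2s$ residues $x$ satisfy it, and every such $x$ lies in $R$. Therefore $\gcd(2\ell,n)\ge 2H-n-2s$, and since $\gcd(2\ell,n)\le 2\ell$ this yields $\ell\ge H-\tfrac n2-s$. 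A short concavity estimate shows $H-\tfrac n2-s\ge\frac{n}{2(n-H+2s+1)}$ throughout the range where the right-hand side is at least $1$, while in the complementary range the claim holds trivially because $ind(f)\ge1$; together these give the stated bound.

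I expect the difficulty to be organisational rather than conceptual. The two points that need care are the ``same coset after shifting by $\ell$'' step, which must be justified with respect to the cosets of $\F_q^*$ rather than those of the subgroup $G$, and the insistence on counting pairs on the full cycle $\mathbb{Z}/n$: restricting to pairs with both endpoints inside $\{N+1,\ldots,N+H\}$ costs a boundary term of order $\ell$ and degrades the bound, whereas the cyclic count is clean. The closing inequality between $H-\tfrac n2-s$ and $\frac{n}{2(n-H+2s+1)}$ is routine. It is worth noting that the method in fact delivers the slightly stronger estimate $ind(f)\ge H-\tfrac n2-s$.
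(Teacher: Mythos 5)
Your proof is correct, but it takes a genuinely different route from the paper's. The paper restricts to the subgroup generated by $\gamma$ and represents an index-$\ell$ mapping, after factoring out $X^r$, by a polynomial $h(X)$ of weight at most $\ell$ and degree at most $(\ell-1)n/\ell$ (a ``straightforward extension'' of \cite{NidWin05}); it then uses agreements at \emph{consecutive} exponents $x,x+1\in S$ to show that $F(X)=h(\gamma X)-\gamma^{1-r}X^2h(X)$ is a nonzero polynomial of weight at most $2\ell$ with at least $H-2s-1$ roots of the form $\gamma^x$, and concludes $2\ell\ge w(F)\ge n/(n-H+2s+1)$ from the sparse-polynomial root bound of \cite{lawi02}. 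You instead difference agreements at exponent distance exactly $\ell$: since $\gamma^\ell=(\gamma)^\ell\in C_{\ell,0}$, the points $\gamma^x$ and $\gamma^{x+\ell}$ share the same coset constant $a_i$ (your justification correctly uses the cosets of $\F_q^*$, so only $\ell\mid q-1$ is needed, neatly sidestepping the restriction-to-$G$ issue the paper glosses over), and subtracting the two agreement relations gives the linear congruence $2\ell x\equiv\ell(r-\ell)\bmod n$ with at most $\gcd(2\ell,n)\le 2\ell$ solutions, while your cyclic count supplies at least $2H-n-2s$ of them; the closing inequality $n/2+1-t\ge n/(2t)$ for $t=n-H+2s+1\in[1,n/2]$ is equivalent to $(t-1)(n-2t)\ge 0$, the complementary range $t>n/2$ being trivial since $ind(f)\ge 1$, and the degenerate case $n\mid\ell$ is absorbed by the chain $2\ell\ge\gcd(2\ell,n)$, so every step checks out. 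Comparing the two: your argument is entirely elementary (no polynomial representation, no root-counting lemma, indeed no field structure, so it works verbatim in any ambient finite cyclic group), it never uses that $S$ sits in an interval (only $|S|=H-s$ and distinctness modulo $n$ enter, so your hypotheses are effectively weaker), and it yields the stronger linear bound $ind(f)\ge |S|-n/2$, which dominates $n/(2(n-H+2s+1))$ whenever the latter exceeds $1$ and matches the sharp value $n/2$ at $H=n$, $s=0$ (Theorem~\ref{thm:IndUniv}, $n$ even). What the paper's sparse-polynomial technique buys in exchange is a template that carries over to the bivariate situation of Theorem~\ref{six} and to the interpolation literature it builds on, whereas your differencing trick is tailored to the univariate map $x\mapsto x^2$, whose step-$\ell$ difference happens to be linear in $x$.
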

\begin{proof}
  For $H=n$ and $s=0$ the result follows from Theorem~\ref{thm:IndUniv} and we may restrict ourselves to the case $n-H+2s+1\ge 2$. 
  Since otherwise the result is trivial we may also assume  
  $$ind(f)\le n/3.$$
  A straightforward extension of \cite[Theorem 1]{NidWin05} provides that any mapping $G$ of index $\ell$ can be represented by a polynomial of the form
  \begin{equation}\label{GX} G(X)=X^r\sum_{i=0}^{\ell-1}A_i X^{in/\ell}.
  \end{equation}
  Now assume that $f$ is of index $\ell$ and thus $h$ defined by
  $$h(\gamma^x)=f(\gamma^x)\gamma^{-rx},\quad x=0,\ldots,n-1,$$
  can be uniquely represented as 
  $$h(X)=G(X)X^{-r}$$
  for some positive integer $r$ and polynomial $G(X)$ of the form $(\ref{GX})$.
  In particular, the weight $w(h)$, that is, the number of nonzero coefficients of $h(X)$, is at most $\ell$, and the degree of $h(X)$ at most $(\ell-1)n/\ell\le n-3$.
  For all but at most $s+1$ elements $x$ of $S$ we have
  \begin{eqnarray*} h(\gamma^{x+1})&=&f(\gamma^{x+1})\gamma^{-r(x+1)}=\gamma^{(x+1)^2-r(x+1)}\\
   &=&\gamma^{x^2-rx}(\gamma^x)^2\gamma^{1-r}
  =\gamma^{1-r}(\gamma^x)^2h(\gamma^x).
  \end{eqnarray*}
  Hence, the polynomial 
  $$F(X)=h(\gamma X)-\gamma^{1-r}X^2h(X)$$
  has at least $|S|-s-1=H-2s-1$ zeros of the form $\gamma^x$, $x\in \{1,\ldots,n\}$.
  The weight $w(F)$ of $F(X)$ satisfies
  $$w(F)\ge \frac{n}{n-H+2s+1}$$
  by \cite[Lemma~1]{lawi02}, which is applicable since $\deg(F)\le n-1$.
  On the other hand, $w(F)\le 2w(h)$ and thus
  $$\ell\ge w(h)\ge \frac{n}{2(n-H+2s+1)},$$
  which completes the proof.
\end{proof}

Remark. 
Theorem~\ref{two} implies 
$$ind(f)\gg \frac{|S|}{n^{1/2}}$$
for any $S$. This lower bound does not exceed $n^{1/2}$.
However, Theorem~\ref{five} provides a larger lower bound than $n^{1/2}$ for any $S$ satisfying the conditions of Theorem~\ref{five} with $n-|S|=o(n^{1/2})$. 

Similar ideas can be used to prove an analog of Theorem~\ref{five} for the bivariate Diffie-Hellman mapping. \\
\begin{thm}\label{six}
 Let $G$ be a subgroup of $\F_q^*$ of order $n|q-1$ generated by $\gamma$, $U$ be any subset of $\{0,1,\ldots,n-1\}$
and $V=\{N,\ldots,N+H-1\}$ be any set of consecutive integers for some $H\le n$. Let $f:G\times G\rightarrow G$ 
be any mapping of index pair $(\ell_1,\ell_2)$ satisfying
$$f(\gamma^x,\gamma^y)=\gamma^{xy},\quad (x,y)\in U\times V.$$
Then we have 
$$\max\{\ell_1,\ell_2\}\ge \min\{|U|,H\}.$$
\end{thm}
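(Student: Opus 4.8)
The plan is to reduce the problem to a one‑variable weight/root‑counting argument in the second coordinate, exploiting that $V$ is a block of consecutive exponents. First I would invoke the bivariate analogue of the representation used in the proof of Theorem~\ref{five} (a straightforward extension of \cite[Theorem 1]{NidWin05}, legitimate because $\ell_1,\ell_2\mid q-1$ are prime to the characteristic): writing $f$ in the form $(\ref{eqn:rth-BivcycMap})$ with parameters $r_1,r_2$ and setting $h(\gamma^x,\gamma^y)=f(\gamma^x,\gamma^y)\gamma^{-r_1x-r_2y}$, the function $h$ is represented over $\mathbb{F}_q$ by a polynomial
$$h(X,Y)=\sum_{i_1=0}^{\ell_1-1}\sum_{i_2=0}^{\ell_2-1}A_{i_1,i_2}X^{i_1n/\ell_1}Y^{i_2n/\ell_2}.$$
Consequently, for each fixed $x$ the one‑variable polynomial $g_x(Y):=h(\gamma^x,Y)$ has at most $\ell_2$ terms, all supported on exponents that are multiples of $n/\ell_2$, and degree below $n$. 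On $U\times V$ the hypothesis $f(\gamma^x,\gamma^y)=\gamma^{xy}$ translates into $g_x(\gamma^y)=\gamma^{-r_1x}(\gamma^{x-r_2})^y$.

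It suffices to prove the stronger statement $\ell_2\ge\min\{|U|,H\}$. If $\ell_2\ge H$ this is immediate, so I would assume $\ell_2\le H-1$ and fix any $x\in U$. The geometric behaviour of $g_x$ on $V$ suggests the shift polynomial
$$F_x(Y)=g_x(\gamma Y)-\gamma^{x-r_2}g_x(Y),$$
which is again supported on the $\le\ell_2$ monomials $Y^{i_2n/\ell_2}$ and vanishes at $\gamma^y$ for every $y\in\{N,\ldots,N+H-2\}$, i.e.\ at $H-1$ consecutive exponents. The key step is to show $F_x\equiv 0$: after the substitution $Z=Y^{n/\ell_2}$, $F_x$ becomes a polynomial $\widetilde F_x(Z)$ of degree at most $\ell_2-1$, and $F_x(\gamma^y)=\widetilde F_x(\omega^y)$ with $\omega=\gamma^{n/\ell_2}$ a primitive $\ell_2$‑th root of unity. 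Since $H-1\ge\ell_2$ consecutive integers $y$ run through every residue modulo $\ell_2$, the points $\omega^y$ exhaust all $\ell_2$ of the $\ell_2$‑th roots of unity; thus $\widetilde F_x$ has $\ell_2$ distinct roots while $\deg\widetilde F_x\le\ell_2-1$, forcing $\widetilde F_x\equiv 0$ and hence $F_x\equiv 0$.

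Reading off the coefficients of $F_x$, the vanishing $F_x\equiv 0$ means that for each $i_2$ either the coefficient of $Y^{i_2n/\ell_2}$ in $g_x$ is zero or $\gamma^{i_2n/\ell_2}=\gamma^{x-r_2}$. Because $f$ maps into $G$, the element $h(\gamma^x,\gamma^y)$ is nonzero for $y\in V$, so $g_x$ is not the zero polynomial; some coefficient is therefore nonzero and its index satisfies $i_2n/\ell_2\equiv x-r_2\pmod n$, whence $x\equiv r_2\pmod{n/\ell_2}$. This congruence has exactly $\ell_2$ solutions in $\{0,\ldots,n-1\}$, and since it must hold for every $x\in U$ I conclude $|U|\le\ell_2$. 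Hence $\min\{|U|,H\}=|U|\le\ell_2$, so in either case $\max\{\ell_1,\ell_2\}\ge\ell_2\ge\min\{|U|,H\}$.

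I expect the $F_x\equiv 0$ step to be the main obstacle: one must recognise that it is precisely the consecutiveness of $V$ together with $|V|>\ell_2$ that makes the sampled arguments $\omega^y$ cover all $\ell_2$‑th roots of unity, which is what upgrades ``a nonzero sparse polynomial has few roots'' into ``the polynomial is identically zero.'' A secondary point needing care is the justification of the bivariate cyclotomic representation of $h$ and the bookkeeping that its $Y$‑coefficients lie in $\mathbb{F}_q$; both are routine extensions of the one‑variable situation already exploited in Theorem~\ref{five}. Note that this argument never uses $\ell_1$ and in fact yields the sharper bound $\ell_2\ge\min\{|U|,H\}$, with the asymmetry reflecting that the consecutive block sits in the $y$‑coordinate.
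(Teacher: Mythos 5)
Your proof is correct, and it takes a genuinely different route from the paper's. The paper argues by linear algebra: with $m=\min\{|U|,H\}$ it selects $u_0,\ldots,u_{m-1}\in U$ and the consecutive exponents $N,\ldots,N+m-1$, rewrites the interpolation conditions as a matrix identity $G=V_1AV_2$ for the zero-padded coefficient matrix $A$, and gets a contradiction because $G$ is, after scaling its rows by nonzero constants, a Vandermonde matrix with the $m$ distinct nodes $\gamma^{u_x-r_2}$ and hence invertible, while under the assumption $\max\{\ell_1,\ell_2\}<m$ the right-hand side has rank at most $\min\{\ell_1,\ell_2\}<m$. (In fact, under that assumption the nodes of $V_1,V_2$ are $\ell_1$-th resp.\ $\ell_2$-th roots of unity and necessarily repeat, so those two matrices are not literally invertible as the paper states; the argument is best read as exactly this rank comparison, a subtlety your approach avoids entirely.) You instead transplant the shift trick from the paper's own proof of Theorem~\ref{five} into the $Y$-variable: for fixed $x\in U$ the polynomial $F_x(Y)=g_x(\gamma Y)-\gamma^{x-r_2}g_x(Y)$ is supported on the monomials $Y^{in/\ell_2}$, $0\le i<\ell_2$, and vanishes at $\gamma^y$ for $H-1\ge\ell_2$ consecutive $y$, which cover every residue class modulo $\ell_2$; after the substitution $Z=Y^{n/\ell_2}$ a polynomial of degree at most $\ell_2-1$ thus vanishes at all $\ell_2$ distinct $\ell_2$-th roots of unity, forcing $F_x\equiv 0$, then the congruence $x\equiv r_2\pmod{n/\ell_2}$, and finally $|U|\le\ell_2$. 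All steps are sound: the bivariate cyclotomic representation you invoke is exactly the one the paper asserts at the start of its proof, legitimate since $n\mid q-1$ makes $\ell_1\ell_2$ prime to the characteristic; and your conclusion $\ell_2\ge\min\{|U|,H\}$ is strictly stronger than the stated $\max\{\ell_1,\ell_2\}\ge\min\{|U|,H\}$ and essentially sharp (e.g.\ $n$ even, $\ell_1=n$, $\ell_2=2$, $U=\{r_2,r_2+n/2\}$, $V=\{0,\ldots,n-1\}$). One further simplification worth noting: your key step needs no polynomial identity at all, since for $y$ and $y+\ell_2$ both in $V$ (available once $H\ge\ell_2+1$) the points $\gamma^y$ and $\gamma^{y+\ell_2}$ lie in the same coset $C_{\ell_2,k_2}$, so dividing $a_{k_1,k_2}\gamma^{r_1x+r_2(y+\ell_2)}=\gamma^{x(y+\ell_2)}$ by $a_{k_1,k_2}\gamma^{r_1x+r_2y}=\gamma^{xy}$ gives $\ell_2(x-r_2)\equiv 0\bmod n$, i.e.\ $x\equiv r_2\pmod{n/\ell_2}$, directly.
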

\begin{proof}
 Put $m=\min\{|U|,H\}$.

It is easy to see that any mapping $f$ of index pair $(\ell_1,\ell_2)$ and order $(r_1,r_2)$ can be represented by a polynomial 
$f(X,Y)$ over $\F_q$
of the form
$$f(X,Y)=X^{r_1}Y^{r_2}\sum_{i=0}^{\ell_1-1}\sum_{j=0}^{\ell_2-1}a_{i,j}X^{in/\ell_1}Y^{jn/\ell_2}.$$
 Then there is a subset $\{u_0,\ldots,u_{m-1}\}$ of $U$ such that
$$\gamma^{u_x(N+y)-r_1u_x-r_2(N+y)}=\sum_{i=0}^{\ell_1-1}\sum_{j=0}^{\ell_2-1}a_{i,j}\gamma^{inu_x/\ell_1+jn(N+y)/\ell_2},\quad
x,y=0,\ldots,m-1.$$
Assume $\max\{\ell_1,\ell_2\}< m$.
Then the coefficient matrix $A=(a_{i,j})_{i,j=0,\ldots,m-1}$, with $a_{i,j}=0$ if $i\ge \ell_1$ or $j\ge \ell_2$, 
satisfies
$$G=V_1AV_2,$$
where 
$$V_1=\left(\gamma^{inu_x/\ell_1}\right)_{i,x=0,\ldots,m-1},\quad V_2=\left(\gamma^{jn(N+y)/\ell_2}\right)_{y,j=0,\ldots,m-1}$$
and 
$$G=\left(\gamma^{(u_x-r_2)(N+y)-r_1u_x}\right)_{x,y=0,\ldots,m-1}.$$
$V_1$ and $V_2$ are Vandermonde matrices and $G$ can be reduced to a Vandermonde matrix by multiplying the $x$th row by the constant $\gamma^{r_1u_x}$.
 Hence, $A$ is the product
of three invertible matrices 
$$A=V_1^{-1}GV_2^{-1}$$
and thus invertible itself.
In particular, each row and each column of $A$ contains at least one nonzero entry which contradicts our assumption $\max\{\ell_1,\ell_2\}<m$.
\end{proof}

Remark. Theorem~\ref{four} implies the lower bound 
$$\max\{\ell_1,\ell_2\}\ge (\ell_1\ell_2)^{1/2}\ge \left(\frac{|U|H}{n^{1+o(1)}}\right)^{1/2}.$$
Its right hand side is always smaller than $n^{1/2}$. Theorem~\ref{six} 
provides a lower bound $\ge n^{1/2}$ for any $U$ and $H$ satisfying $\min\{|U|,H\}\ge n^{1/2}$.

\section*{Acknowledgment}

The second author is partially supported by the Austrian Science Fund FWF Project P 30405-N32. 
Parts of this paper were written during a visit of the first
author to RICAM. She would like to express her sincere thanks for the hospitality during her visit.
The authors would like to thank Steven Wang for useful discussions.

\end{document}